\newtheorem{theorem}{Theorem}[section]
\newtheorem{definition}[theorem]{Definition}
\newtheorem{problem}[theorem]{Problem}
\newtheorem{remark}[theorem]{Remark}
\numberwithin{equation}{section}
\begin{document}\title[Universal Block Tridiagonalization]{
Universal Block Tridiagonalization in $\mathcal{B}(\mathcal{H})$ and Beyond}

\author{Sasmita Patnaik}
\address{S. Patnaik, Department of Mathematics and Statistics, Indian Institute of Technology, Kanpur 208016 (INDIA)}
\email{sasmita@iitk.ac.in}
\author{Srdjan Petrovic}
\address{S. Petrovic, Department of Mathematics, Western Michigan University, Kalamazoo, MI 49008 (USA)}
\email{srdjan.petrovic@wmich.edu}
\author{Gary Weiss*}
\address{G. Weiss, Department of Mathematics, University of Cincinnati, Cincinnati, OH 45221-0025 (USA)}
\email{gary.weiss@math.uc.edu}

\thanks{Partially supported by Simons Foundation grant number 245014\\In memoriam of Victor Lomonosov}
\dedicatory{In memoriam of Victor Lomonosov}

\maketitle

\begin{abstract}
For $\mathcal{H}$ a separable infinite dimensional complex Hilbert space, we prove that every  $\mathcal{B}(\mathcal{H})$ operator has a  basis with respect to which its matrix representation has a universal block tridiagonal form with block sizes given by a simple exponential formula independent of the operator. {From this, such a matrix representation can be further sparsified to {slightly sparser forms}; it can lead to a direct sum of even sparser forms reflecting in part some of its reducing subspace structure; and in the case of operators without invariant subspaces (if any exists), it gives a plethora of sparser block tridiagonal representations.}
{An extension to unbounded operators occurs for a certain domain of definition condition.} Moreover this process gives rise to many different choices of block sizes. 
\end{abstract}

\section{Introduction}

How sparse can a matrix of an operator be? By this we mean the following: If $T$ is a bounded linear operator on infinite dimensional, separable, complex Hilbert space $\mathcal{H}$, can we find an orthonormal basis with respect to which the matrix of $T$ has as many zero entries as possible? 
A change of basis corresponds to a unitary operator $U$, which yields the matrix  representation of $T$ in the new basis $\{Ue_n\}$, or equivalently,  the matrix of $U^{-1}TU$ in the original basis $\{e_n\}$.
 Thus our question can be phrased as: How sparse can $U^{-1}TU$ be made? 

An extreme example is the Spectral Theorem for normal compact operators yielding diagonal operators.
It is well known that  every selfadjoint operator (even when not compact) that possesses a cyclic vector can be represented as a tridiagonal matrix, and if it does not possess a cyclic vector then it can at least be represented as a direct sum (finite or infinite) of tridiagonal matrices.
We found a way to universally generalize these tridiagonal phenomena but with block tridiagonal matrices. That is, we will extend the tridiagonal form idea to each $\mathcal{B}(\mathcal{H})$ operator but it will be  a block tridiagonal matrix with universal block sizes independent of the operator. Moreover, our methods will hold  more generally, i.e. for all $\mathcal{B}(\mathcal{H})$ operators and for unbounded operators with a certain constraint.

{Block tridiagonal matrices have been useful in establishing various results related to \cite{PT1971}, the Pearcy--Topping 
compact commutator problem: What operators are commutators of compact operators, i.e., operators of the form
$[A,B]=AB-BA$, with $A,B$ compact? An outstanding result in this direction is Anderson's construction in \cite{JA1977}, where he 
employed block tridiagonal matrices with a particular arithmetic mean growth
to
prove that every rank one projection  operator is a commutator of compact operators. 
From here, he proved the important consequence: Every compact operator is  a commutator of a compact operator with a bounded operator.}
{Moreover, in \cite{PT1971} Pearcy--Topping asked whether every trace class operator with zero trace is a commutator of Hilbert-Schmidt operators, or at least a finite sum of such commutators. In the same period the third author 
answered these questions in the negative in \cite{GW1980}. This work introduced the 
study of matrix sparsification in terms of 
staircase form representations.}

Historically, \cite{GW1980} introduced staircase forms for general operators (Theorem~\ref{t1}) which herein leads  us to  universal block tridiagonal forms (Theorem~\ref{T1}).
In this article, we will obtain a general matrix sparsification via a special unitary operator by showing how the staircase forms can be reorganized into block tridiagonal forms. 
As in Anderson's model \cite{JA1977}, we hope block tridiagonal forms are more manageable for computations. 
Then we provide two independent further sparsifications, potentially  even more manageable for computations.

\section{Sparsifying arbitrary matrices}\label{sec2}

{The following general staircase form is obtained (from a slight modification of \cite[Lemma]{GW1980})
by considering the free semigroup on the two generators $T,T^*$ 
with any basis $\{e_n\}$ to generate a new basis via applying Gram-Schmidt to 
$e_1$, $Te_1$, $T^*e_1$, $e_2$, $T^2e_1$, $T^*Te_1$, $e_3$, $TT^*e_1$, ${T^*}^2e_1$, $e_4$, $Te_2$, $T^*e_2$, $e_5$, $\dots$.} {The latter list consists of $\mathcal{W}(T,T^*)$ (all words in $T,T^*$, including the empty word $I$) applied to all elements of $\{e_n\}$ and arranged in this special order: first list $e_1$ followed by $T,T^*$ applied to $e_1$ to obtain the first three vectors; then list $e_2$, followed by $T,T^*$ applied to the second vector in this list, namely $Te_1$; and so on.}

An alternate way to view this is to start an induction with the first three terms $e_1$, $Te_1$, $T^* e_1$, followed by $e_2$, $e_3$, $\dots$, and then use Gram-Schmidt to obtain $f_1,f_2,f_3$ (the first 3 vectors of the new basis). Inductively, assume that $f_1,f_2,\dots, f_{3n}$ have been chosen orthonormal with $e_1,e_2,\dots,e_n$ in their span and $e_i, Tf_i, T^* f_i\in \vee_{j=1}^{3n} f_j$, for $1\le i\le n$;
then we extend this list to
$ f_{3n+1},f_{3n+2}, f_{3(n+1)}
$
by continuing the Gram-Schmidt process with the first three new linearly independent vectors from the list 
$$
f_1,f_2,\dots,f_{3n},
e_{n+1},Tf_{n+1}, T^*f_{n+1},e_{n+2},e_{n+3},\dots. 
$$
Clearly because $\{e_n\}$ spans $\mathcal{H}$, $\{f_n\}$ forms a basis. And moreover, this basis together with this inductive condition  yield the following matrix form with respect to $\{f_n\}$ by using the condition $Tf_n, T^*f_n\in \vee_{j=1}^{3n} f_j$ for all $n$.

\begin{theorem}\cite[Theorem 2]{GW1980}
\label{t1}
For every $T \in B(H)$, there is a basis whose implementing operator $U$ fixes an arbitrary $e_1$ and
with respect to which basis $\{e_n\}$ the matrix of $U^{-1}TU$ takes the staircase form 
\begin{equation}\label{E0}
U^{-1}TU = \begin{pmatrix}
*&*&*&0&\cdots\\
*&*&*&*&*&*&0& \cdots\\
*&*&\cdots\\
0&*&\\
0&*&\\
0&*&\\
\vdots&0&
\end{pmatrix},
\end{equation}
with each $e_n\in \vee_{k=1}^{3n} Ue_k$ and where this (\ref{E0}) matrix has row and column support lengths $3,6,9,\dots$.

In addition, if $S_1, \cdots, S_N$ is any finite collection of selfadjoint operators, then there is a unitary operator $U$ that fixes $e_1$ for which each operator $U^{-1}S_kU$ has the form (\ref{E0})  with $3,6,9,\dots$ replaced by $N+1, 2(N+1), 3(N+1), \cdots$, with each $e_n\in\vee_{k=1}^{1+(n-1)(N+1)} Ue_k$. When $S_1, \cdots, S_N$ are not necessarily selfadjoint, we have $3,6,9,\dots$ replaced by $2N+1, 2(2N+1), 3(2N+1), \cdots$, with each $e_n\in\vee_{k=1}^{1+(n-1)(2N+1)} Ue_k$. 
\end{theorem}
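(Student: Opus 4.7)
The plan is to execute the inductive Gram-Schmidt construction sketched just before the statement and then read off the staircase form directly from the resulting span conditions. For the base case, normalize $e_1$ and set $f_1 := e_1$; apply Gram-Schmidt to the list $f_1, Tf_1, T^*f_1, e_2, e_3, \ldots$, discarding zero outputs, and take the first two successful unit vectors as $f_2, f_3$. Infinite-dimensionality of $\mathcal{H}$ guarantees this succeeds, and by construction $Tf_1, T^*f_1, e_1 \in \vee_{j=1}^{3} f_j$.

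For the inductive step, assume $f_1,\ldots,f_{3n}$ have been built, orthonormal with $e_1,\ldots,e_n \in \vee_{j=1}^{3n} f_j$ and $Tf_i, T^*f_i \in \vee_{j=1}^{3n} f_j$ for $1 \le i \le n$. Apply Gram-Schmidt to
\[
f_1, \ldots, f_{3n}, e_{n+1}, Tf_{n+1}, T^*f_{n+1}, e_{n+2}, e_{n+3}, \ldots,
\]
taking the first three successful outputs as $f_{3n+1}, f_{3n+2}, f_{3(n+1)}$. The tail of this list contains $\{e_k\}_{k\ge n+1}$, which together with $f_1, \ldots, f_{3n}$ spans $\mathcal{H}$, so three new orthonormal vectors are always available. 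By placement, all of $e_{n+1}, Tf_{n+1}, T^*f_{n+1}$ lie in $\vee_{j=1}^{3(n+1)} f_j$, closing the induction. The resulting $\{f_n\}$ is orthonormal and spans $\mathcal{H}$ since every $e_k$ is absorbed by stage $k$. Let $U$ be the unitary defined by $Ue_n = f_n$; then $Ue_1 = e_1$, so $U$ fixes $e_1$.

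To read off the staircase form, write $(U^{-1}TU)_{ij} = \langle Tf_j, f_i\rangle$. The condition $Tf_j \in \vee_{k=1}^{3j} f_k$ makes this vanish for $i > 3j$, so column $j$ is supported in rows $1,\ldots,3j$. The condition $T^*f_j \in \vee_{k=1}^{3j} f_k$ gives $\langle f_j, Tf_i\rangle = 0$ for $i > 3j$, so row $j$ is supported in columns $1,\ldots,3j$. Combined, this is exactly (\ref{E0}) with row and column support lengths $3,6,9,\ldots$. For the extensions, replace $\{Tf_{n+1}, T^*f_{n+1}\}$ in the Gram-Schmidt list by $\{S_k f_{n+1} : 1 \le k \le N\}$ in the selfadjoint case (adding $N+1$ vectors per stage) or by $\{S_k f_{n+1}, S_k^* f_{n+1} : 1 \le k \le N\}$ in the general case (adding $2N+1$ per stage). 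Selfadjointness converts the column support bound automatically into a row support bound via $\langle S_k f_j, f_i\rangle = \langle f_j, S_k f_i\rangle$; in the non-selfadjoint case one enforces the row bound by explicitly including $S_k^* f_{n+1}$.

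The main (modest) obstacle is bookkeeping: verifying that at every stage the Gram-Schmidt quota of new orthonormal vectors can indeed be filled, and that the inductive span conditions are preserved. Both points reduce to the observation that the Gram-Schmidt list at each stage still contains enough of the spanning set $\{e_n\}$ to extract the required number of linearly independent vectors. Once the enumeration of $T, T^*$-images (or $S_k, S_k^*$-images) interleaved with the original basis vectors has been correctly set up, the staircase form falls out mechanically from the two span conditions.
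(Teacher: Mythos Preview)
Your proof is correct and follows essentially the same approach as the paper: you carry out precisely the inductive Gram--Schmidt construction sketched in the two paragraphs preceding the theorem, and then derive the staircase pattern from the span conditions $Tf_j,\,T^*f_j\in\vee_{k=1}^{3j}f_k$ exactly as the paper indicates. Your handling of the selfadjoint and non-selfadjoint extensions, and of the bookkeeping ensuring three new orthonormal vectors are always available, matches the paper's argument as well.
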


\begin{definition}
We call the $*$-entries in (\ref{E0}) the support entries. Albeit some can also be zero as we shall see in Theorems~\ref{T2} and \ref{t3}.
\end{definition}
\begin{remark}\label{rem1}
(i) In fact we have a little more. 
The proof for a single operator yields $2,5,8,\dots$ for the columns  and $3,6,9,\dots$ for the rows, i.e., $Tf_n$ is a linear combination of at most $f_1, f_2, \cdots, f_{3n-1}$ vectors and $T^*f_n$ is a linear combination of at most $f_1, f_2, \cdots, f_{3n}$ vectors, for $n\geq 1$. From here an important question is: {Is there a more substantial sparsification of the form in (\ref{E0})?} Theorems \ref{T2} and \ref{t3} achieve this and imply the obvious question: Can we sparsify these forms even further? One goal as mentioned earlier is to improve their computation potential.

 {When $T$ possesses a cyclic vector $v$ (i.e., $v, Tv, T^2v,\dots$ spans $\mathcal{H}$), (\ref{E0}) would have instead column support sizes $2,3,4,\dots$ obtaining an upper Hessenberg form \cite[Problem 44]{PH1982}. Or if one preferred also simultaneous sparsification of the rows, and had a joint cyclic vector in that $v, Tv, T^*v, T^2v, T^*Tv, TT^*v, {T^*}^2v, \dots$ span $\mathcal{H}$ (the free semigroup on the two generators $T,T^*$ but arranged in this specific order),  then applying Gram-Schmidt to this sequence obtains the matrix pattern $2,4,6,\dots$  columns and $3,5,7, \dots$ rows. So in particular, for an operator $T$ with no invariant subspaces (if indeed one exists), every nonzero vector is cyclic (otherwise for noncyclic $v$, the span of 
 $v, Tv, T^2v, \dots$ is a nontrivial invariant subspace). 
  Or, if $T$ has  no proper reducing subspaces (which can occur) then every vector is  jointly cyclic, i.e. $v, Tv, T^*v, T^2v, T^*Tv, TT^*v, {T^*}^2v, \dots$   spans $\mathcal{H}$. In either case we obtain for $T$ this above {$2,4,6,\dots$/ $3,5,7,\dots$}
  sparser staircase pattern.}

   {(ii) The same applies to those unbounded operators $T$ whose free semigroup on these two generators has a basis on which all words ${w}(T,T^*)$ are defined.} Albeit, we don't know how to test for this.
\end{remark}

\noindent\textbf{From Staircase to Block Tridiagonal Matrix Forms}

\medskip
Theorem \ref{t1} gives a striking $3,6,9,\dots$ staircase universal form for an arbitrary operator and  universal simultaneous staircase forms with larger stairs for arbitrary finite collections. 

Staircase form (\ref{E0}) will allow us to represent a matrix in universal block tridiagonal form
\begin{equation}\label{E}
T = \begin{pmatrix}
C_1 &  A_1  &0           \\
B_1 & C_2&A_2              \\          
 0 &B_2 &C_3 &\ddots            \\
&\, &\ddots  &\ddots    
\end{pmatrix},
\end{equation}
which we believe may be fundamental and of broad general interest.
Even for our  orthonormal basis $\{e_n\}_{n=1}^{\infty}$ in which $T$ is given by {(\ref{E})},
the sizes of these blocks we will determine and see they are not unique (see Theorem~\ref{T1}). We believe it could be of further general  interest to make the (\ref{E}) 
matrix blocks as sparse as possible, i.e., obtain further universal zeros.
Two ways to accomplish this are demonstrated in Theorems~\ref{T2}--\ref{t3}.
We begin with the
following theorem which gives canonical dimensions for the blocks in (\ref{E}) in order that they cover all the support entries. And since there will be no change of basis, we  retain the Theorem~\ref{t1} condition that each $e_n\in \vee_{k=1}^{3n} Ue_k$.

\begin{theorem}\label{T1}
For all $T \in B(H)$, the block tridiagonal partition of the matrix of $T$ induced by (\ref{E0}) is given by (\ref{E}),
where $C_1, A_1$, and $B_1$ are $1\times 1$, $1 \times 2$ and $2 \times 1$ matrices, respectively; and for $k\geq 2$,
\begin{equation}\label{eq1}
\begin{aligned}
C_{k}&: 2(3^{k-2}) \times 2(3^{k-2}) \mbox{ square matrix (i.e.,  $2\times 2,6\times 6, 18\times 18,\dots$)}\\
A_{k}&: 2(3^{k-2}) \times 2(3^{k-1}) \mbox{ rectangular  wide matrix}\\
B_{k}&: 2(3^{k-1}) \times 2(3^{k-2}) \mbox{ rectangular tall matrix.}
\end{aligned}
\end{equation}
Alternatively one can choose
$C_1,A_1,B_1$ to be $n_1\times n_1$, $n_1\times 2n_1$, $2n_1\times n_1$ matrices respectively; and for $k\ge 2$, $C_k,A_k,B_k$ respectively of sizes $2(3^{k-2})n_1 \times 2(3^{k-2})n_1$, $2(3^{k-2})n_1 \times 2(3^{k-1})n_1$, $2(3^{k-1})n_1 \times 2(3^{k-2})n_1$.
More generally, necessary and sufficient conditions that (\ref{E}) cover the (\ref{E0}) staircase support entries are: $n_1$ is chosen arbitrarily and $n_{k+1} \ge 2 ( n_1 + n_2 + \dots + n_k )$. 
\end{theorem}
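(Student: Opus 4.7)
The plan is to reduce the block tridiagonal covering requirement to a single arithmetic inequality on the cumulative block sums $N_k := n_1 + \cdots + n_k$, and then to read off the specific dimensions in~(\ref{eq1}) as the minimal instance.

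First, I will fix notation. By Theorem~\ref{t1}, in~(\ref{E0}) the $i$-th row is supported in columns $1,\ldots, 3i$ and the $j$-th column in rows $1,\ldots, 3j$, so the staircase support is
\[
\mathcal{S} = \{(i,j) : i \le 3j \text{ and } j \le 3i\}.
\]
Define the block-index function $p(m) = k$ iff $N_{k-1} < m \le N_k$ (with $N_0 := 0$). The block partition~(\ref{E}) covers $\mathcal{S}$ exactly when $(i,j) \in \mathcal{S}$ forces $|p(i) - p(j)| \le 1$.

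Next, I will establish sufficiency by splitting into the upper-right and lower-left exclusions. For the upper right, any $(i,j) \in \mathcal{S}$ with $i \le N_k$ satisfies $j \le 3i \le 3 N_k$; this places $j$ in some block $\le k+1$ provided $3 N_k \le N_{k+1}$. For the lower left, any $(i,j) \in \mathcal{S}$ with $j \le N_k$ satisfies $i \le 3j \le 3 N_k$, which again requires $3 N_k \le N_{k+1}$. Both reduce to $N_{k+1} \ge 3 N_k$, equivalently $n_{k+1} \ge 2 N_k$. For necessity, whenever $N_{k+1} < 3 N_k$ I will exhibit the single offending support position $(N_k, 3 N_k) \in \mathcal{S}$ (both defining inequalities hold trivially), whose row lies in block $k$ but whose column lies in block $\ge k+2$, violating the covering condition.

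Finally, I will specialize: setting $n_1 = 1$ and taking the minimal choice $n_{k+1} = 2 N_k$ forces $N_{k+1} = 3 N_k$, hence $N_k = 3^{k-1}$ and $n_k = 2 \cdot 3^{k-2}$ for $k \ge 2$, matching~(\ref{eq1}); replacing $n_1 = 1$ by an arbitrary positive integer rescales the entire sequence and yields the alternative listed family. The argument has no serious obstacle; the one point worth flagging is that because Theorem~\ref{t1} gives the \emph{same} support-length $3n$ for both rows and columns, the upper-right and lower-left exclusions collapse to a single inequality, which is precisely why $n_{k+1} \ge 2(n_1 + \cdots + n_k)$ is simultaneously necessary and sufficient.
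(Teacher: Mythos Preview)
Your argument is correct and essentially the same as the paper's. Both reduce the covering requirement to the single inequality $N_{k+1}\ge 3N_k$ (equivalently $n_{k+1}\ge 2(n_1+\cdots+n_k)$) by observing that the last row of the $k$th block group has support reaching column $3N_k$, which must lie in block $k+1$; your presentation via the explicit support set $\mathcal{S}$ and the block-index function $p$ is slightly more formal, and your necessity witness $(N_k,3N_k)$ is made more explicit than in the paper, but the underlying idea is identical.
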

\begin{proof}
The partition of the matrix for $T$ to blocks goes row by row. We select the sizes for $C_1$ and $A_1$ to be $1\times 1$ and $1 \times 2$, respectively.  This forces $B_1$ to be $2\times 1$. 
For any $k\in\mathbb{N}$, if the sizes of $C_k$, $A_k$ and $B_k$ are $n_k\times n_k$, $n_k\times n_{k+1}$ and $n_{k+1}\times n_{k}$,  respectively, then the members of the sequence $\{n_k\}$ must satisfy the condition $n_{k+1} \ge 2 ( n_1 + n_2 + \dots + n_k )$. Indeed, the width of $A_{k}$ has to be sufficient to cover all support entries on the right of $C_{k}$ and these lie in rows $n_1+n_2+\dots + n_{k-1} +1$ through $n_1+n_2+\dots + n_k$. The last of these support entries stretch out to the position $3(n_1+n_2+\dots + n_k)$. However, block $A_{k}$ starts with the column $n_1+n_2+\dots + n_{k-1} +1$, so by considering  its last (\ref{E0})-staircase row it needs to cover at least $3(n_1+n_2+\dots + n_k)-(n_1+n_2+\dots + n_k)=2(n_1+n_2+\dots + n_k)$ more support entries. Consequently, the $C_{k+1}$ size $n_{k+1} \ge 2 ( n_1 + n_2 + \dots + n_k )$. Taking equality for every $k$ and $n_1=1$ yields (\ref{eq1}).

The necessary and sufficient conditions that (\ref{E}) cover the (\ref{E0}) staircase support entries: $n_1$ is chosen arbitrarily and $n_{k+1} \ge 2 ( n_1 + n_2 + \dots + n_k )$,
follows  by the same argument.
\end{proof}

{
\begin{remark}
Although our canonical choice in Theorem~\ref{T1} uses the minimal value of each $n_k$ for $k\ge 1$, i.e., $n_1=1$ and $n_k=2(n_1+\dots+n_{k-1})$,
the covering of support entries in (\ref{E0}) is \emph{not} minimal in the block tridiagonal sense. That is, if we choose a different sequence $\{n_k'\}$ and denote the 
corresponding blocks in (\ref{E}) by $\{A_n'\}$, $\{B_n'\}$, $\{C_n'\}$, they need not completely cover the full canonical blocks.
For example:  if we select $n_1'=4$, and  $n_{k}'=2(n_1'+\dots+n_{k-1}')$ for $k\ge 2$, the $(4,27)$ entry in the matrix of $T$ does not lie in any of the $\{A_n'\}$, $\{B_n'\}$, $\{C_n'\}$  blocks, yet it belongs to $A_3$.
Since this example has $n_1=4$ one might ask whether the canonical blocks are minimal among those that have $n_1=1$. Once again, the answer is no: take $n_1'=1$, $n_2'=3$, and $n_{k}'=2(n_1'+\dots+n_{k-1}')$ for $k\ge 3$. Again, the $(4,27)$ entry in the matrix of $T$ does not lie in any of the associated blocks, yet it belongs to $A_3$.
\end{remark}
}

\textit{Cyclic vector consequences.}
In case $T$ and $T^*$ have a joint cyclic vector $v$, i.e., the collection $\mathcal{W}(T,T^*)v$ spans $\mathcal{H}$, 
so if in particular the operator $T$ has a cyclic vector $v$ ($\mathcal{W}(T)v$ spans $\mathcal{H}$), then the (\ref{E}) diagonal square blocks have smaller sizes $1\times 1$, $2\times 2$, $4\times 4$, $8\times 8$, $\dots$ and with off-diagonal block sizes forced accordingly. Indeed,
the list $e_1$, $Te_1$, $T^*e_1$, $e_2$, $T^2e_1$, $T^*Te_1$, $e_3$, $TT^*e_1$, ${T^*}^2e_1$, $e_4$, $Te_2$, $T^*e_2$, $e_5$, $\dots$, that was used in the proof of Theorem~\ref{t1} is now substantially reduced. That is, the vectors $e_n$, for $n\ge 2$, can be deleted, their purpose being to ensure that the new orthogonal set spans $\mathcal{H}$. It follows that instead of the $3,6,9,\dots$ pattern we get $2,4,6,8,\dots$. The same reasoning as in the proof of 
Theorem~\ref{T1} now yields the inequality $n_{k+1} \ge  n_1 + n_2 + \dots + n_k $ and equality for all $k$  leads to the blocks of smaller sizes than in (\ref{eq1}). 
Namely from $n_{k+1}=n_1 + n_2 + \dots + n_k $ and $n_k=n_1+n_2+\dots +n_{k-1}$, we obtain $n_{k+1}-n_k=n_k$, $n_{k+1}=2n_k$, and finally $n_{k}=2^{k-1}n_1$.
Then in general, $\mathcal{H}$ can be represented as an orthogonal direct sum of reducing subspaces on which $T$ and $T^*$ have a joint cyclic vector, and thus we have:
\begin{theorem}
Every $\mathcal{B}(\mathcal{H})$ operator is a direct sum of operators of the form (\ref{E}) where the sizes of diagonal blocks in each summand are  $1\times 1$, $2\times 2$, $4\times 4$, $8\times 8$, $\dots$ and the sizes of the off-diagonal blocks are forced accordingly.
\end{theorem}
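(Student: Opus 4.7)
The strategy is to reduce to the cyclic case sketched in the paragraph immediately preceding the theorem by first decomposing $\mathcal{H}$ into an orthogonal sum of reducing subspaces on each of which $T$ and $T^*$ admit a joint cyclic vector, and then applying the cyclic version of the staircase/block tridiagonal construction summand by summand. The main (and only real) obstacle is producing the reducing decomposition; once in hand, everything else is a clean invocation of what has already been established.

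First, I would construct the decomposition by a standard maximality argument. For any nonzero $v\in\mathcal{H}$, the closed linear span $\mathcal{M}_v:=\overline{\operatorname{span}}\,\mathcal{W}(T,T^*)v$ is invariant under both $T$ and $T^*$, hence reducing for $T$, and by construction $v$ is a joint cyclic vector for the restrictions $T|_{\mathcal{M}_v}$ and $T^*|_{\mathcal{M}_v}$. Apply Zorn's lemma to the poset of families of pairwise orthogonal reducing subspaces each admitting a joint cyclic vector, ordered by inclusion, and let $\{\mathcal{M}_\alpha\}_{\alpha\in\Lambda}$ be a maximal family. Then $\bigoplus_\alpha \mathcal{M}_\alpha=\mathcal{H}$: otherwise the orthogonal complement $\mathcal{N}$ would be a nonzero reducing subspace, any $0\neq w\in\mathcal{N}$ would give $\mathcal{M}_w\subseteq\mathcal{N}$ orthogonal to all $\mathcal{M}_\alpha$, and adjoining it would contradict maximality. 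Separability of $\mathcal{H}$ forces $\Lambda$ to be at most countable.

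Next, on each reducing summand $\mathcal{M}_\alpha$ with joint cyclic vector $v_\alpha$, I would apply Gram--Schmidt to the ordered list
\[
v_\alpha,\ Tv_\alpha,\ T^*v_\alpha,\ T^2v_\alpha,\ T^*Tv_\alpha,\ TT^*v_\alpha,\ {T^*}^2v_\alpha,\ \ldots
\]
used in Remark~\ref{rem1}(i), with the auxiliary $e_n$'s omitted since joint cyclicity of $v_\alpha$ already guarantees that the span is dense in $\mathcal{M}_\alpha$. The resulting orthonormal basis gives $T|_{\mathcal{M}_\alpha}$ a staircase matrix with column support lengths $2,4,6,\ldots$ and row support lengths $3,5,7,\ldots$ Repeating the counting argument of Theorem~\ref{T1} verbatim, the condition that a block tridiagonal partition with diagonal-block sizes $n_1,n_2,\ldots$ cover all support entries becomes $n_{k+1}\ge n_1+n_2+\cdots+n_k$; taking equality with $n_1=1$ gives the recursion $n_{k+1}=2n_k$, and hence $n_k=2^{k-1}$, producing the diagonal block sizes $1,2,4,8,\ldots$ with off-diagonal sizes forced accordingly.

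Finally, concatenating the bases of the $\mathcal{M}_\alpha$'s yields an orthonormal basis of $\mathcal{H}$ with respect to which $T$ is block diagonal, each block taking the form (\ref{E}) with the claimed diagonal sizes $1,2,4,8,\ldots$. The whole argument comes down to the Zornification in the first step: everything else is a matter of rerunning the Theorem~\ref{T1} partition count in the denser (cyclic) regime. One place where I would double-check a detail is that deleting the $e_n$'s in the Gram--Schmidt input really does preserve the inductive condition $Tf_n,T^*f_n\in \bigvee_{j=1}^{2n} f_j$ that drives the $2,4,6,\ldots/3,5,7,\ldots$ pattern, since this is the sole ingredient that translates into the block-size bookkeeping on each $\mathcal{M}_\alpha$.
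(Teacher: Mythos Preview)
Your proposal is correct and follows essentially the same route as the paper: the paper's argument is precisely the paragraph preceding the theorem, which (i) observes that in the joint cyclic case the $e_n$'s ($n\ge 2$) can be dropped so the staircase pattern becomes $2,4,6,\dots$ and the block recursion collapses to $n_{k+1}\ge n_1+\cdots+n_k$, yielding $n_k=2^{k-1}$, and (ii) asserts without proof that $\mathcal{H}$ decomposes as an orthogonal sum of reducing subspaces on each of which $T,T^*$ have a joint cyclic vector. Your Zorn's lemma argument simply supplies the details behind the paper's assertion in (ii), and your detail-check at the end is exactly the content of Remark~\ref{rem1}(i).
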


It is natural to ask whether
Theorem~\ref{T1} can be further improved, i.e.  obtain more universal zeros in the blocks. Here, we are attempting to preserve the structure and block sizes as in  Theorem~\ref{T1}, while ensuring that some  additional entries in these blocks are universally zeros.
The following theorem presents one way to achieve this. It applies more generally only requiring that $\{n_k\}$ be non-decreasing (so that $A_k$ has width no less than its height).

\begin{theorem}\label{T2}
Every block tridiagonal matrix with diagonal block sizes $\{n_k\}$ non-decreasing is unitarily equivalent to a block tridiagonal matrix with the same block sizes but also with $A_n$ of the form $(A'_n \mid 0)$ with $A'_n$ a positive square matrix. Alternatively, the same but with $B_n$ of the form $(B'_n \mid 0)^T$ with $B'_n$ a positive square matrix.
\end{theorem}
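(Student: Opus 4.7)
The plan is to construct a block-diagonal unitary $U=\bigoplus_{k\ge 1} U_k$, with each $U_k$ a unitary on $\mathbb{C}^{n_k}$, and to choose the $U_k$'s inductively via polar decomposition so that each transformed $A_k$ takes the required form. The reason to insist on block-diagonal $U$ is that such a $U$ automatically preserves both the block tridiagonal pattern and the block sizes: the $(j,k)$-block of $U^*TU$ is $U_j^*T_{j,k}U_k$, so zero off-diagonal blocks stay zero, while the surviving off-diagonals become $U_k^*A_kU_{k+1}$ and $U_{k+1}^*B_kU_k$.

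For the induction, I would take $U_1$ to be any unitary on $\mathbb{C}^{n_1}$ (say the identity). Given $U_1,\dots,U_k$, form $M:=U_k^*A_k\in\mathbb{C}^{n_k\times n_{k+1}}$ and apply the polar decomposition to $M^*$: write $M^*=VP$, where $P:=(MM^*)^{1/2}$ is an $n_k\times n_k$ positive semi-definite matrix and $V$ is the canonical partial isometry from $(\ker P)^\perp\subseteq\mathbb{C}^{n_k}$ into $\mathbb{C}^{n_{k+1}}$. Because $n_{k+1}\ge n_k$, this $V$ can be extended to an honest isometry $\tilde V:\mathbb{C}^{n_k}\hookrightarrow\mathbb{C}^{n_{k+1}}$ (by choosing its action on $\ker P$ to land in the orthogonal complement of $V((\ker P)^\perp)$), and then further extended to a unitary $U_{k+1}$ on $\mathbb{C}^{n_{k+1}}$ whose first $n_k$ columns are those of $\tilde V$. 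The identity $M^*=\tilde VP$ is unaffected by this extension since $P$ vanishes on $\ker P$, and taking adjoints together with $\tilde V^*U_{k+1}=(I_{n_k}\mid 0)$ yields $MU_{k+1}=(P\mid 0)$, i.e.\ $U_k^*A_kU_{k+1}=(A'_k\mid 0)$ with $A'_k:=P$ positive square. Assembling $U:=\bigoplus_kU_k$ gives the desired unitary equivalence.

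The alternative statement for the $B$-blocks follows by the same recipe: at each step polar-decompose $B_kU_k=WB'_k$ where $B'_k\succeq 0$ is square and $W:\mathbb{C}^{n_k}\hookrightarrow\mathbb{C}^{n_{k+1}}$ is an isometry (after extension), then pick $U_{k+1}$ to be a unitary whose first $n_k$ columns are those of $W$; one checks $U_{k+1}^*B_kU_k=(B'_k\mid 0)^T$.

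The only nontrivial point to verify is the extension step: the partial isometry from polar decomposition is a priori defined only on $(\ker P)^\perp$, and we need enough room in $\mathbb{C}^{n_{k+1}}$ to extend it to a full isometry on $\mathbb{C}^{n_k}$ and thence to a unitary on $\mathbb{C}^{n_{k+1}}$. This is exactly where the non-decreasing hypothesis $n_k\le n_{k+1}$ enters, and it is equally what is needed for a square $A'_k$ to fit as the left block of the $n_k\times n_{k+1}$ matrix $A_k$. Once that dimension count is noted, the rest of the argument is routine.
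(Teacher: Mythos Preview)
Your proof is correct and follows essentially the same approach as the paper: build a block-diagonal unitary $U=\bigoplus_k U_k$ inductively via polar decomposition so that each $U_k^*A_kU_{k+1}$ has the form $(P\mid 0)$ with $P\ge 0$. The only cosmetic differences are that the paper pads $U_k^*A_k$ with a zero block below to make it square before taking the polar decomposition (so the polar factor can be taken unitary outright, avoiding the explicit partial-isometry extension you carry out), and it obtains the alternative $B$-statement by applying the first part to $T^*$ rather than rerunning the construction.
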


\begin{proof}
Consider the block tridiagonal matrix form of $T$ as in (\ref{E}) with $\{n_k\}$ non-decreasing. We will define recursively a sequence of unitary matrices $\{U_n\}$ with the size of $U_n$ same as the size of $C_n$ (i.e., $n_k\times n_k$), and $U$ their direct sum. The matrix for $U^*TU$ becomes
\begin{equation*}
\begin{pmatrix}
U_1^*C_1U_1 & U_1^* A_1 U_2 &0&\dots\\
U_2^*B_1U_1 & U_2^*C_2U_2 &U_2^*A_2 U_3&\ddots\\
0 &U_3^*B_2U_2 &U_3^*C_3U_3 &\ddots \\
\vdots &\ddots &\ddots  &\ddots    
\end{pmatrix}.
\end{equation*}
Let $U_1=I$ and 
suppose that matrices $\{U_i\}_{i=1}^k$ have been selected. Consider the square matrix
$$
X=\begin{pmatrix}
U_k^*A_k\\0
\end{pmatrix},
$$
with the zero matrix of height $n_{k+1}-n_k$, and let $X^*=UP$ be the polar decomposition of $X^*$. Then $P=U^*X^*=XU$, because $P$ is selfadjoint. Further,
$$
XU = \begin{pmatrix}
U_k^*A_k\\0
\end{pmatrix}U = 
\begin{pmatrix}
U_k^*A_kU\\0
\end{pmatrix}.
$$
On the other hand,
$
P^2=PP^*=(XU)(XU)^*=XUU^*X^*=XX^*$, so
$$
P=\left[\begin{pmatrix}
U_k^*A_k\\0
\end{pmatrix}
\begin{pmatrix}
A_k^*U_k&0
\end{pmatrix}\right]^{1/2}=
\begin{pmatrix}
[U_k^*A_kA_k^*U_k]^{1/2}&0\\0&0
\end{pmatrix},
$$
and we define $U_{k+1} =U$. 
This implies that  $U_k^*A_kU_{k+1} =([U_k^*A_kA_k^*U_k]^{1/2}\mid 0)$, hence $T$ is unitarily equivalent to the block tridiagonal form 
\begin{equation}\label{eq33}
\begin{pmatrix}
\tilde C_1&  \tilde A_1  &0 &\dots          \\
\tilde B_1 & \tilde C_2&\tilde A_2 &\ddots             \\       
 0 &\tilde B_2 &\tilde C_3 &\ddots            \\
\vdots &\ddots &\ddots  &\ddots    
\end{pmatrix}
\end{equation}
where each $\tilde A_n$ has the form $(A'_n \mid 0)$ with $A'_n$ a positive square matrix.

The ``alternatively'' part
of Theorem~\ref{T2} follows by applying to $T^*$ the first part. 
\end{proof}

At this point
it is natural to ask whether
this form can be sparsified further. That is, is there a choice of an orthonormal basis  in which (\ref{E})  can be sparsified beyond the above matrix (\ref{eq33})?
Recently, we have been able to prove such a result in the Theorems~\ref{t1}, \ref{T1} setting.
{However we here get the weaker spanning condition:  $e_n\in \vee_{k=1}^{3^n} Ue_k$, than that of Theorem~\ref{t1}.}

\begin{theorem}\label{t3}
For arbitrary $T\in\mathcal{B}(\mathcal{H})$ and any orthonormal basis $\{e_n\}$ of $\mathcal{H}$,
there exists an orthonormal basis $\{f_n\}$  in which $T$ has a block tridiagonal form as in (\ref{E})
with the block sizes as in Theorem~\ref{T1} (with $n_1=1$) and:
\begin{list}{}{\setlength{\leftmargin}{.2in}}
\item (a) each block $B_n=(B_n' \mid 0)^T$ where $B_n'$ is square upper triangular, i.e., $B(i,j)=0$ if $i>j$;
\item (b) each block $A_n$ is 
of the form $(A'_n \mid A_n'' \mid 0)$ where all three blocks are square and
$A_n''$ is 
lower triangular, i.e., $A_n''(i,j)=0$ if $i<j$.
\item (c) $e_1=f_1$ and $e_n\in \vee_{k=1}^{3^n} f_k$ for all $n\in\mathbb{N}$.
\end{list}
{Alternatively, $T$ is unitarily equivalent to another matrix of the form (\ref{E}) with the block sizes as in Theorem~\ref{T1} (with $n_1=1$),
where each $A_n = (A'_n \mid 0)$ and $A_n'$ is square lower triangular, and each $B_n$ has the form $(B'_n \mid B_n''\mid 0)^T$ with $B''_n$ an upper triangular matrix and (c) holds.}
\end{theorem}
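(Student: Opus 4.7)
The plan is to build $\{f_n\}$ block by block via a structured Gram--Schmidt procedure refining that of Theorem~\ref{t1}, subdividing each new block into three ordered subgroups that separately enforce the claims on $B_n$, on $A_n''$, and on the trailing zero columns of $A_n$. Let $V_k:=\vee_{j=1}^{N_k}f_j$ with $N_k=n_1+\dots+n_k=3^{k-1}$, and $W_{k+1}:=V_{k+1}\ominus V_k$. Initialize $f_1:=e_1$. Inductively, having built $V_k$ with the required triangularity on prior blocks, I will choose the orthonormal basis of $W_{k+1}$ so that $TV_k, T^*V_k \subseteq V_{k+1}$, $e_k\in V_{k+1}$ (securing block tridiagonality and condition~(c)), together with the new triangularity.

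Using the identity $n_{k+1}=3n_k$ (valid for $k\ge 2$), I will partition block $k+1$ into three consecutive subgroups $G_I, G_{II}, G_{III}$ of size $n_k$ each; for $k=1$ only $G_I, G_{II}$ of size $1$ appear. For $G_I$, I will orthonormalize in order the vectors $P_{V_k^\perp}Tf_j$ as $j$ runs through block $k$, padding with arbitrary orthonormal completing vectors whenever some projections lie in the span of earlier Gram--Schmidt output. The Gram--Schmidt property then forces $P_{V_k^\perp}Tf_{N_{k-1}+j}\in\mathrm{span}\{f_{N_k+1},\dots,f_{N_k+j}\}$, whence $\langle Tf_{N_{k-1}+j}, f_{N_k+i}\rangle=0$ for $i>j$, making $B_k'$ upper triangular; and any $f_{N_k+i}$ with $i>n_k$ lies in $G_{II}\cup G_{III}$, orthogonal to $G_I$ and to $V_k$, hence orthogonal to $Tf_{N_{k-1}+j}$, which makes the lower portion of $B_k$ vanish. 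I will then produce $G_{II}$ by orthonormalizing $P_{(V_k\oplus G_I)^\perp}T^*f_j$ in the block-$k$ order; the same mechanism forces $A_k''$ lower triangular.

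I choose $G_{III}$ to be any orthonormal basis of a prescribed $n_k$-dimensional subspace of $\mathcal{H}\ominus(V_k\oplus G_I\oplus G_{II})$ whose first vector is the unit-normalized $P_{(V_k\oplus G_I\oplus G_{II})^\perp}e_k$; this secures $e_k\in V_{k+1}$ and hence condition~(c). For the trailing zero block of $A_k$: every $f\in G_{III}$ is orthogonal to $V_k$, to $G_I$ and to $G_{II}$; since by construction $G_{II}$ contains $P_{W_{k+1}\ominus G_I}T^*f_j$ for every $j$ in block $k$, we obtain $\langle f, T^*f_j\rangle = 0$, hence $\langle Tf, f_j\rangle=0$, so the last $n_k$ columns of $A_k$ vanish. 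Degenerate Gram--Schmidt steps (projection falling in the prior span) only strengthen the triangularity, since the arbitrarily chosen completing vectors remain orthogonal to the relevant spans.

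The alternative form will follow by applying the above to $T^*$ in place of $T$ (with the same $\{e_n\}$) and conjugate-transposing the resulting block tridiagonal matrix: $A_n(T)=B_n(T^*)^*$ and $B_n(T)=A_n(T^*)^*$, so upper-triangular blocks become lower-triangular under the adjoint and vice versa, while condition~(c) is unchanged by the adjoint. The main obstacle will be the tight dimension bookkeeping---the identity $n_{k+1}=3n_k$ just barely supplies three subgroups of size $n_k$, and one must confirm that absorbing $e_k$ into a single slot of $G_{III}$ never disturbs the triangularity enforced by $G_I$ and $G_{II}$. This holds automatically because $G_{III}$ is defined purely by orthogonality to $V_k\oplus G_I\oplus G_{II}$ and $e_k$ enters only through its further orthogonal component.
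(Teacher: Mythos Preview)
Your argument is correct and follows essentially the same strategy as the paper: build a pre-sequence by interleaving $T$-images, $T^*$-images, and the vectors $e_k$ in a controlled order, then Gram--Schmidt. The paper carries this out via explicit index formulas for a single sequence $\{g_n\}$ (with the $T^*$-portion in block $k{+}1$ comprising $2n_k-1$ vectors and a single slot for $e_k$), whereas you phrase it block-by-block with the clean trisection $G_I\cup G_{II}\cup G_{III}$ of equal size $n_k$; both orderings yield (a), (b), (c), and your presentation makes the triangularity mechanism a bit more transparent.
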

\begin{proof}
Our first step is to
define recursively a sequence $\{g_n\}$ that contains  $\{e_n\}$  dispersed more sparsely than the sequence described in the first paragraph of Section 2. 
We will use the notation $n_1=1$,  $n_k=2(3^{k-2})$, for $k\ge 2$, and $s_k=n_1+n_2+\dots+n_k$, for $k\ge 1$ and $s_0=0$. It is clear that every positive integer $n$ can be written in a unique way as 
$$
n=s_k+r,\quad\mbox{ where }\quad 1\le r\le n_{k+1}.
$$
Then we define $g_1=e_1$, $g_2=Te_1$, $g_3=T^*e_1$ and for $n\ge 4$ we use the following formulas:
\begin{gather}
g_n=Tg_{s_{k-1}+r},\quad\mbox{ when }\quad 
s_k+1\le n\le s_k+n_k
.\label{eq21}\\
g_n=T^*g_{r+1-k}\quad\mbox{ when }\quad 
s_k+n_k+1\le n\le s_{k+1}-1
.\label{eq22}\\
g_n=e_k\quad\mbox{ when }\quad 
n= s_{k+1}
.\label{eq23}
\end{gather}
It is not hard to verify that $\{g_n\}$ is a well-defined sequence that contains $\{e_k\}$. 
Let us assume for a moment that $\{g_n\}$ is linearly independent. (The case of linear dependence we deal with at the end of the proof.)
By applying Gram-Schmidt process to $\{g_n\}$ we obtain an orthonormal basis $\{f_n\}$.
An easy calcuation yields $s_{k+1}=3^k$ so the spanning condition follows from (\ref{eq23}).

It follows from (\ref{eq21}) that the length of the nonzero portion of the $m$th column, where $m=s_{k-1}+r$, does not exceed $n=s_k+r$. 
The inequalities in (\ref{eq21}) imply that $1\le r\le n_k$, hence $s_{k-1}+1\le m\le s_{k-1}+n_k=s_k$ and these characterize the columns that go through the block $B_k$. Since the length of the nonzero portions of these columns does not exceed $n=s_k+r = m+n_k$ it follows that $B_k$ is upper triangular and all the blocks below $B_k$ in (\ref{E}) are zeros.

Similarly, the inequalities in
(\ref{eq22}) imply that $n_k+1\le r\le n_{k+1}-1 $, whence
$$
s_{k-1}\le n_k+2-k\le r+1-k\le n_{k+1}-k\le s_{k+1}.
$$
This shows that row $r+1-k$ goes through either the block $A_k$ or $A_{k+1}$. More precisely, it goes through $A_k$ if 
\begin{equation}
\label{eq31}
n_k+2-k\le r+1-k\le s_k,
\end{equation}
and through $A_{k+1}$ if 
\begin{equation}
\label{eq32}
s_k +1\le r+1-k\le n_{k+1}-k.
\end{equation}
If we replace $k$ by $k+1$ in (\ref{eq31}) we obtain
rows numbered 
$n_{k+1}-k+1$ through $s_{k+1}$. Together with (\ref{eq32}) it shows that as $k$ takes positive integer values all rows of the matrix for $T$ appear in (\ref{eq22}).

For rows that go through $A_k$, (resp., $A_{k+1}$) condition in (b) means that the length of the nonzero portion of row $m$ should not exceed $m+2n_{k+1}/3$ (resp., $m+2n_{k+2}/3$).
By (\ref{eq22}), the length of the row $r+1-k$ does not exceed $n=s_k+r = (r+1-k) +(s_k +k-1)$. A calculation  shows that $s_k +k-1\le 2n_{k+1}/3\le 2n_{k+2}/3$ for $k\ge 2$, so (b) is proved together with the fact that all blocks in (\ref{E}) to the right of $A_k$ are indeed zero blocks.

In the case that the sequence $\{g_n\}$ constructed above is not linearly independent, there  exists $n_0\in\mathbb{N}$ for which $g_{n_0}\in \vee_{k=1}^{n_0-1} g_k$. In that case, we will delete the equation that has $g_{n_0}$ on the left side and in the subsequent equations $g_{n_0+1}$ will be replaced by $g_{n_0}$, $g_{n_0+2}$ by $g_{n_0+1}$, etc. Since in each equation 
of the form $g_n=Tg_i$ (resp, $g_n=T^*g_i$), $n$ determines the maximum length of the $i$-th row (resp., column), this will decrease the said maximum by one so the conclusions of the theorem will hold all the more. Of course, if there is a next such number, we apply the same procedure, and so on.

The ``alternatively'' part
of Theorem~\ref{t3} follows by applying to $T^*$ the first part. 
\end{proof}

\begin{remark}
Both Theorem~\ref{T2} and Theorem~\ref{t3} exhibit a lack of symmetry regarding the role of blocks $\{A_n\}$ and $\{B_n\}$, but we do not know if each of these further sparsifications  $A_n,B_n$ forms (Theorems~\ref{T2} and \ref{t3}) can be achieved symmetrically. We suspect not in general.
\end{remark}
\begin{remark}
The results of this section share the same method of finding the desired orthonormal basis $\{f_n\}$. An arbitrary orthonormal basis $\{e_n\}$ is augmented by adding all vectors  of the form $w(T,T^*)e_k$, (all words in ${T,T^*}$, i.e., the free semigroup on two generators, applied to all $e_k$), arranged in a certain order, to which the Gram-Schmidt orthogonalization is applied. It follows that the same results hold even when $T$ is an unbounded operator, as long as all words $w(T,T^*)e_k$ are defined. One condition that achieves this is when each $e_k\in (\mbox{domain }T)\cap (\mbox{domain }T^*)$ and
$(\mbox{range }T)\cup (\mbox{range }T^*)\subset (\mbox{domain }T)\cap (\mbox{domain }T^*)$.
\end{remark}

Both Theorems~\ref{T2} and \ref{t3} show that each block $A_n$ can be sparsified to $(A'_n \mid 0)$ with $A'_n$ a positive square matrix in the former and a lower triangular matrix in the latter. 
(In both cases, the same number of variables have been eliminated.)
Then, one may ask whether there is a further sparsification in which every $A'_n$ is a  diagonal matrix. 
\begin{problem}\label{pbm2.11}
Given an operator $T$, is there 
an orthonormal basis  in which
$T$ is of the form (\ref{E}),
where each $A_n$ has the form $(A'_n \mid 0)$ and $A'_n$ is a diagonal matrix?
\end{problem}
We considered the following $5\times 5$ test matrix (as in Remark~\ref{rem1}):
\begin{equation}
\label{eq3}
T=\begin{pmatrix}
1 &  1 &1 &0  &0   \\
1 & 1&1 &  1 &1    \\   
0 & 1&1 &  1 &1    \\
0 &1&1 &  1 &1    \\
0 &1&1 &  1 &1  
\end{pmatrix}.
\end{equation}
Note: replacing $t_{31}=1$ makes $T$ selfadjoint, hence diagonalizable and not an appropriate test case. Also, 
Remark~\ref{rem1}
shows that one can obtain
the (3,1) entry equal to 0. 
This question was answered  affirmatively by Zack Cramer, University of Waterloo, who produced the following unitary matrix
\begin{equation*}
U=\frac{1}{\sqrt2}\left(\begin{array}{rrrrr}
0 & 0 &\sqrt2 &0  &0   \\
0 & 1&0 &  -1 &0    \\   
0 & 1&0 &  1 &0    \\
1 &0&0 &  0 &1    \\
1 &0&0 &  0 &-1  
\end{array}\right).
\end{equation*}
It is easy to verify that 
\begin{equation*}
U^*TU=\frac{1}{2}\left(\begin{array}{rrrrr}
4 & 4 &0 &0  &0   \\
4 & 4 &\sqrt2 &0  &0   \\   
0 & 2\sqrt2 &2 &  0 &0    \\
0 &0&-\sqrt2 &  0 &0    \\
0 &0&0 &  0 &0  
\end{array}\right),
\end{equation*}
so $T$ is unitarily equivalent to a matrix
with  the entries (1,3), (2,5) and (3,4) equal to 0, and the five zero entries of $T$ remaining zeros. 
Nevertheless, Problem~\ref{pbm2.11} remains open even in the general $5\times 5$ test case.

\section*{References}
\begin{biblist}

\bib{JA1977}{article}{
   author={Anderson, Joel},
   title={Commutators of compact operators},
   journal={J. Reine Angew. Math.},
   volume={291},
   date={1977},
   pages={128--132},
   issn={0075-4102},
   review={\MR{0442742}},
   doi={10.1515/crll.1977.291.128},
}
\bib{PH1982}{book}{
	author={Halmos, Paul Richard},
	title={A Hilbert space problem book},
	series={Graduate Texts in Mathematics},
	volume={19},
	edition={2},
	note={Encyclopedia of Mathematics and its Applications, 17},
	publisher={Springer-Verlag, New York-Berlin},
	date={1982},
	pages={xvii+369},
	isbn={0-387-90685-1},
	review={\MR{675952}},
}
\bib{HJ2013}{book}{
   author={Horn, Roger A.},
   author={Johnson, Charles R.},
   title={Matrix analysis},
   edition={2},
   publisher={Cambridge University Press, Cambridge},
   date={2013},
   pages={xviii+643},
   isbn={978-0-521-54823-6},
   review={\MR{2978290}},
}
\bib{PPW2019}{article}{
author={Patnaik, Sasmita}
author={Petrovic, Srdjan}
	AUTHOR = {Weiss, Gary},
     TITLE = {Commutators of Compact Operators and Sparsifying Arbitrary Matrices},
   JOURNAL = {preprint}
} 
\bib{PT1971}{article}{
   author={Pearcy, Carl},
   author={Topping, David},
   title={On commutators in ideals of compact operators},
   journal={Michigan Math. J.},
   volume={18},
   date={1971},
   pages={247--252},
   issn={0026-2285},
   review={\MR{0284853}},
}
\bib{GW1980}{article}{
	AUTHOR = {Weiss, Gary},
     TITLE = {Commutators of {H}ilbert-{S}chmidt operators. {II}},
   JOURNAL = {Integral Equations Operator Theory},
  FJOURNAL = {Integral Equations and Operator Theory},
    VOLUME = {3},
      YEAR = {1980},
    NUMBER = {4},
     PAGES = {574--600},
      ISSN = {0378-620X},
   MRCLASS = {47B47 (47B10)},
  MRNUMBER = {595752},
MRREVIEWER = {D. A. Herrero},
       DOI = {10.1007/BF01702316},
       URL = {https://doi.org/10.1007/BF01702316},
} 

\end{biblist} 
\end{document}